\documentclass[a4paper,fleqn]{cas-sc}

\usepackage[numbers,sort&compress]{natbib}
\usepackage{graphicx}
\usepackage{amsthm}
\usepackage{amsmath}
\usepackage{amsfonts}
\usepackage{amssymb}
\usepackage{mathrsfs}
\usepackage{geometry}
\usepackage{color}
\usepackage{placeins}
\usepackage{float}
\newcommand\br{\mathbb{R}}

\newcommand\dd{\,\mathrm{d}}
\def\tsc#1{\csdef{#1}{\textsc{\lowercase{#1}}\xspace}}
\tsc{WGM}
\tsc{QE}

\newtheorem{theorem}{Theorem}[section]

\newdefinition{rmk}{Remark}
\newtheorem{prop}[theorem]{Proposition}

\begin{document}
\let\WriteBookmarks\relax
\let\printorcid\relax
\def\floatpagepagefraction{1}
\def\textpagefraction{.001}

\shorttitle{Symmetric Exponential Integrator for RCPD}    

\shortauthors{Z. R. Shen and B. Wang}  

\title [mode = title]{An Explicit Symmetric Exponential Integrator and Its Error Estimate for the Relativistic Charged-Particle Dynamics}  

\tnotemark[1] 

\tnotetext[1]{This work was supported partially by the National Natural Science Foundation of China (Grant No. 12371403).} 

%

\author[1]{Zhirui Shen}



\ead{zrshen@stu.xjtu.edu.cn}



\affiliation[1]{organization={School of Mathematics and Statistics},
            addressline={Xi'an Jiaotong University}, 
            city={Xi'an},
            postcode={710049}, 
            country={China}}

\author[1]{Bin Wang}
\cormark[1]

\ead{wangbinmaths@xjtu.edu.cn}




\cortext[1]{Corresponding author}



\begin{abstract}
This paper investigates the equations of motion for a relativistic charged particle in a general magnetic field. By reformulating the dynamics in four-dimensional spacetime and separating the linear and nonlinear parts, we construct an explicit symmetric exponential integrator based on Lie splitting. Rigorous analysis establishes its unconditional stability and second-order convergence. Numerical experiments confirm its superior performance, including accuracy, effciency and long-time Hamiltonian conservation.\nocite{*}
\end{abstract}



\begin{keywords}
Explicit symmetric method\sep Exponential integrator\sep Error estimate\sep  Relativistic charged-particle dynamics 
\end{keywords}

\maketitle

\section{Introduction}\label{intro}

In this paper, we consider the equations of motion of a relativistic charged particle of the form
\begin{equation}\label{3D}
    \boldsymbol x'(\bar{t})=\frac{\boldsymbol p (\bar{t})}{\gamma},\quad \boldsymbol p'(\bar{t})=\frac{\boldsymbol p(\bar{t})}{\gamma}\times\boldsymbol B(\boldsymbol x(\bar{t}))+\boldsymbol E(\boldsymbol x(\bar{t})),\quad\gamma=\sqrt{1+|\boldsymbol p|^2},
\end{equation}
where $\boldsymbol x\in\br^3$ is the position at time $\bar t$, $\boldsymbol u\in\br^3$ is the momentum, and $\gamma$ is the relativistic factor. Here, $\boldsymbol B(\boldsymbol x)=(B_1,B_2,B_3)(\boldsymbol x)$ and $\boldsymbol E(\boldsymbol x)=(E_1,E_2,E_3)(\boldsymbol x)$ are respectively the magnetic and electric field at $\boldsymbol x$. It is governed by the relativistic Lorentz force equations, which form a nonlinear system of ordinary differential equations endowed with intrinsic geometric structures, such as time-reversal symmetry, phase-space volume preservation, and Lorentz covariance \cite{jackson1999classical,rohrlich2008dynamics}.

In recent years, symmetric numerical methods characterized by self-adjoint discrete flows, are known to possess favorable long-time behavior and reduced numerical dissipation \cite{benettin1994hamiltonian,hairer2006geometric}. For charged-particle dynamics, symmetric multistep and one-step methods have been studied in depth, together with rigorous long-term error analysis and near-conservation results \cite{hairer2017symmetric,hairer2020long,hairer2022large}. In parallel, exponential integrators and splitting methods have emerged as powerful tools for systems with stiff or oscillatory components \cite{mclachlan2002splitting}. They have been successfully applied to charged-particle and kinetic equations, including uniformly accurate methods for Vlasov-type models \cite{crouseilles2017uniformly,chartier2019uniformly,chartier2020uniformly} and error analysis of splitting schemes under magnetic effects \cite{wang2021error}. However, most existing exponential integrators for charged-particle dynamics are not designed to be symmetric.

From a relativistic perspective, the equations of motion admit a natural formulation in four-dimensional (4D) spacetime, which provides a convenient framework for incorporating Lorentz covariance into numerical algorithms. This viewpoint has motivated the development of covariant symplectic and volume-preserving methods for relativistic charged-particle dynamics \cite{wang2016lorentz,wang2021high,Zhang2024strongmagnetic}.
In the broader context of geometric numerical integration, symplectic and variational integrators based on discrete Lagrangian or Hamiltonian formulations \cite{maeda1982lagrangian,qin2008variational,tao2016explicit,zhang2018explicit}, as well as volume-preserving algorithms \cite{feng1995volume,he2015volume,zhang2015volume,he2016high,matsuyama2017high}, have been extensively studied.
Nevertheless, symmetric exponential integrators (sEI) derived from the 4D equations of motion, without assuming a strong magnetic field regime, remain comparatively scarce.

In this work, we focus on the general magnetic field. By reformulating the relativistic equations of motion in a 4D form and separating the linear and nonlinear components, we construct an exponential integrator of (\ref{4D}) with some remarkable features as
\begin{enumerate}
    \item It is a symmetric method of second order.
    \item The method is based on a simple composition of Lie splitting and features a concise form.
    \item The method is explicit and computationally efficient.
\end{enumerate}

The remainder of the paper is organized as follows. In Section \ref{brief}, we briefly review the equations of motion of relativistic charged particle dynamics in 4D formulations and present the specific form of our explicit symmetric exponential integrator (sEI). Section \ref{converge} provides the stability and convergence analysis of sEI. The numerical experiments in Section \ref{numerical} illustrate the performance of the new integrators. Finally, some concluding remarks are drawn in Section \ref{conclusion}.

\section{Main results}\label{main results}
\subsection{A brief introduction to the 4D equations and sEI}\label{brief}
Let $\tau$ be the proper time as the parameter for the particle's worldline with $\frac{\dd\bar{t}}{\dd\tau}=\gamma$, through the coordinate transformation $\boldsymbol v=\boldsymbol p$, $w=i\gamma$ and $t=i\bar{t}$, we represent the time and space coordinates of (\ref{3D}) as a four-dimensional spacetime (cf.\cite{Zhang2024strongmagnetic}):
\begin{equation}\label{4D}
    \begin{aligned}\dot{\boldsymbol{x}}(\tau)&=\boldsymbol{v}(\tau),\\\dot{t}\left(\tau\right)&=w(\tau),\\\dot{\boldsymbol{v}}(\tau)&=-iw(\tau)\boldsymbol E(\boldsymbol x(\tau))+v(\tau)\times\boldsymbol{B}(\boldsymbol x(\tau)),\\\dot{w}(\tau)&=i\boldsymbol E(\boldsymbol x)\cdot v(\tau).\end{aligned}   
\end{equation}
Both the variables $t$ and $w$ are cure imaginary numbers, whereas  $\boldsymbol y=(\boldsymbol x,t)^T$ is called 4-position and $\boldsymbol u=(\boldsymbol v ,w)^T$ is called 4-velocity. By defining the following two matrices
\[
    \widehat{B}(\boldsymbol{x}):=\begin{pmatrix}0&B_3&-B_2&0\\-B_3&0&B_1&0\\B_2&-B_1&0&0\\0&0&0&0\end{pmatrix},\quad
     \widehat{E}(\boldsymbol{x})=\begin{pmatrix}0&0&0&-iE_1\\0&0&0&-iE_2\\0&0&0&-iE_3\\iE_1&iE_2&iE_3&0\end{pmatrix},
\]
we can further rewrite (\ref{4D}) as
\begin{equation}\label{origin}
\frac{\dd}{\dd \tau}\begin{pmatrix}
 \boldsymbol x\\
 t\\
 \boldsymbol v\\
 w
\end{pmatrix}=\begin{pmatrix}
 0 & I_4\\
 0 & \widehat{B}(\boldsymbol{x_0})
\end{pmatrix}\begin{pmatrix}
 \boldsymbol x\\
 t\\
 \boldsymbol v\\
 w
\end{pmatrix}+\begin{pmatrix}
 0\\
 0\\
G(\boldsymbol{x})\begin{pmatrix}
 \boldsymbol  v\\
w
\end{pmatrix}
\end{pmatrix},
\end{equation}
where
\(
    G(\boldsymbol{x})=\widehat{B}(\boldsymbol{x})-\widehat{B}(\boldsymbol{x_0})+\widehat{E}(\boldsymbol{x}).
\)
Setting $U=(\boldsymbol{x},t,\boldsymbol{v},w)^T=(\boldsymbol{y},\boldsymbol{u})^T$, we write (\ref{origin}) as
\begin{equation}\label{U'=LU+F(U)}
    U'=LU+F(U),\quad\mathrm{with}\quad L=\begin{pmatrix}
 0 & I_4\\
 0 & \boldsymbol B(\boldsymbol{x_0})
\end{pmatrix}\ \mathrm{and}\ F(U)=(0,0,G(\boldsymbol{x})\boldsymbol u)^T.
\end{equation}

Suppose $h$ be the time-step and $U^n$ be the $n$-th numerical solution of (\ref{U'=LU+F(U)}). Letting $\mathcal U^{n+1}=(U^{n+1},U^{n})^T$, the sEI can be written as
\begin{equation}\label{sEI}
    \begin{aligned}\mathcal U^{n+1}=&\Phi_h(\mathcal U^{n})=\begin{pmatrix}0&e^{2hL}\\1&0\end{pmatrix}\mathcal U^{n}+\begin{pmatrix}2he^{hL}F(U^n)\\0\end{pmatrix},\quad n\geq1,\\U^{1}=&e^{hL}U^{0}+\tau e^{hL}F(U^{0}).\end{aligned}
\end{equation}
The matrix exponential $e^{hL}$ can be caculated by $e^{hL}=\sum_{n=0}^{\infty}(hL)^n/n!$ and we get
\begin{equation}
e^{hL} = \begin{pmatrix}
I_4 &  \begin{pmatrix} h\varphi_1(h S) & 0 \\ 0 & h \end{pmatrix} \\[6pt]
0 & \begin{pmatrix} e^{h S} & 0 \\ 0 & 1 \end{pmatrix}
\end{pmatrix}\quad\mathrm{with}\quad \varphi_1(hS)=\int_0^1e^{\sigma hS}\dd\sigma,\quad S = \begin{pmatrix}
0 &  {B}_3 & - {B}_2 \\
- {B}_3 & 0 &  {B}_1 \\
 {B}_2 & - {B}_1 & 0
\end{pmatrix}.
\end{equation}
The block matrices $e^{tS}$ and $\varphi_1(tS)$ can be computed using Rodrigues formula, yielding
\[
e^{tS} = I_3 + \frac{\sin(t\theta)}{\theta} S + \frac{1 - \cos(t\theta)}{\theta^2} S^2,\quad \varphi_1(tS) = I_3 + \frac{1 - \cos(t\theta)}{t \theta^2} S + \frac{1}{\theta^2} \left(1 - \frac{\sin(t\theta)}{t \theta} \right) S^2,
\]
where $\theta = \sqrt{ {B}_1^2 +  {B}_2^2 +  {B}_3^2}$. 
\begin{rmk}\label{matrix norm}
The above computations yield some norm estimates for $e^{hS}$, $\varphi_1(hS)$ and $e^{hL}$. Since $S$ is a skew-symmetric matrix, $e^{hS}$ is orthogonal, and hence $$||e^{hS}||\le1.$$ Next, note that $||\varphi_1(hS)\boldsymbol v||\le \int_0^1||e^{\sigma hS}\boldsymbol v||\dd\sigma\le||\boldsymbol v||$ holds for any $\boldsymbol v\in\br^3$, thus $$||\varphi_1(hS)||\le1.$$ Moreover, for any $ V=(\boldsymbol a,\boldsymbol b)^T\in\br^4\times\br^4$, $||e^{hL}V||^2\le ||\boldsymbol a+h\boldsymbol b||^2+||\boldsymbol b||^2\le(1+h)^2(||\boldsymbol a||^2+||\boldsymbol b||^2)$, that is to say $$||e^{hL}||\le1+h.$$
\end{rmk}

\subsection{Convergence analysis for sEI}\label{converge}
We first proceed to prove the unconditional stability of the sEI.
\begin{prop}[Stability]\label{stable}
Suppose $V_1,V_2,W_1,W_2 \in C^1(\br^4\times\br^4)$ and $h\in(0,1]$, and denote $\mathcal V=(V_1,V_2)^T$ and $\mathcal W=(W_1,W_2)^T$. Then the sEI (\ref{sEI}) satisfies, with $C>0$ independent to $h$,
\begin{equation}
    ||\Phi_h(\mathcal V)-\Phi_h(\mathcal W)||\le (1+Ch)||\mathcal V-\mathcal W||.
\end{equation}
\end{prop}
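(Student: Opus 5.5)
The plan is to expand $\Phi_h(\mathcal V)-\Phi_h(\mathcal W)$ componentwise from (\ref{sEI}) and bound each component separately. By definition,
\[
\Phi_h(\mathcal V)-\Phi_h(\mathcal W)=\begin{pmatrix} e^{2hL}(V_2-W_2)+2he^{hL}\bigl(F(V_1)-F(W_1)\bigr)\\ V_1-W_1\end{pmatrix}.
\]
Hence the whole argument reduces to two estimates. The first is a bound $\|e^{2hL}\|\le 1+Ch$ for the linear propagator. The second is a Lipschitz bound $\|F(V_1)-F(W_1)\|\le L_F\|V_1-W_1\|$ for the nonlinear term.

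For the linear part I will use Remark \ref{matrix norm}, which gives $\|e^{hL}\|\le 1+h$, so $\|e^{2hL}\|\le\|e^{hL}\|^2\le(1+h)^2\le 1+3h$ for $h\in(0,1]$. For the nonlinear part I use that $F(U)=(0,0,G(\boldsymbol x)\boldsymbol u)^T$ with $G$ built from $\widehat B$ and $\widehat E$. The hypothesis that the data are $C^1$ will be read, as is standard in such analyses, as saying the fields $\boldsymbol B,\boldsymbol E$ are $C^1$ with bounded derivatives. The arguments $V_1,W_1$ are also taken to range in a bounded set, e.g.\ a neighbourhood of the exact solution. Then writing
\[
G(\boldsymbol x_V)\boldsymbol u_V-G(\boldsymbol x_W)\boldsymbol u_W=\bigl(G(\boldsymbol x_V)-G(\boldsymbol x_W)\bigr)\boldsymbol u_V+G(\boldsymbol x_W)(\boldsymbol u_V-\boldsymbol u_W)
\]
and applying the mean value theorem gives the Lipschitz bound with $L_F$ depending only on $\sup|\nabla \boldsymbol B|$, $\sup|\nabla\boldsymbol E|$, $\sup|\boldsymbol B|$, $\sup|\boldsymbol E|$ and the bound on $\boldsymbol u$, not on $h$.

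Combining the two estimates, the first component is bounded by $(1+3h)\|V_2-W_2\|+2h(1+h)L_F\|V_1-W_1\|$ and the second by $\|V_1-W_1\|$. Squaring, summing and using $2ab\le a^2+b^2$ should give $\|\Phi_h(\mathcal V)-\Phi_h(\mathcal W)\|^2\le(1+Ch)^2\|\mathcal V-\mathcal W\|^2$.

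The main obstacle is this final assembly, because the two-step map swaps components. The first output component is controlled by $\|V_2-W_2\|$ with factor $1+O(h)$, and the second output equals $V_1-W_1$ with factor exactly $1$. The cross term contributes only $O(h)$, so the estimate should close in the Euclidean norm on $\mathcal V$. If the direct squaring is messy, I would instead use the max-norm $\max(\|V_1-W_1\|,\|V_2-W_2\|)$, where the bound $(1+3h+4hL_F)$ is immediate, and then note that norms are equivalent with $h$-independent constants. That equivalence suffices for the subsequent convergence argument, though not verbatim for the stated Euclidean inequality. I would therefore try the Euclidean computation first.
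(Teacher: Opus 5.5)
Your proposal is correct and follows the paper's route: bound $e^{2hL}$ via Remark~\ref{matrix norm}, bound the nonlinear increment by $2h\|e^{hL}\|$ times a Lipschitz constant of $F$, and combine. The assembly you flag as the obstacle closes at once by the triangle inequality in the product space, since $\|(e^{2hL}a,b)^T\|\le(1+2h)\|(a,b)^T\|$, so the max-norm fallback is not needed. Your treatment of $F$ is more careful than the paper's, which writes $F(V_2)-F(W_2)$ (it should be $F(V_1)-F(W_1)$, as you have) and uses only $\|G\|\le 2\|\widehat B\|+\|\widehat E\|$. This silently drops the term $\bigl(G(\boldsymbol x_V)-G(\boldsymbol x_W)\bigr)\boldsymbol u_V$, which is exactly what forces your restriction to bounded fields, bounded field derivatives and bounded momenta.
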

\begin{proof}
    By the Remark \ref{matrix norm} we have
    \begin{align*}
        ||\Phi_h(\mathcal V)-\Phi_h(\mathcal W)||&\le ||(e^{2hL}(V_2-W_2),V_1-W_1)^T||+2h||e^{hL}(F(V_2)-F(W_2))||\\
        &\le (1+2h)||\mathcal V-\mathcal W||+2(2||\widehat{B}||+||\widehat{E}||)h||\mathcal V-\mathcal W||.
    \end{align*}
    Hence, taking $C=2(1+2||\widehat{B}||+||\widehat{E}||)$ gives the required constant.
\end{proof}
With the stability of sEI established above, we can now prove its second-order convergence.
\begin{theorem}
    Let \(U^n\) be the numerical solution of the sEI (\ref{sEI}) with 
    \(\boldsymbol B, \boldsymbol E \in C^2(\mathbb{R}^3;\br^3)\), for solving (\ref{origin}) up to some fixed \(T > 0\), and let \(U(\tau_n)\) be the exact solution of system (\ref{U'=LU+F(U)}) at \(\tau_n\). For the method with the time step \(0 < h \leq 1\), it has the following error bound
    \begin{equation}
        ||U(\tau_n)-U^n||\le C h^2,\quad0\leq n\leq T/h,
    \end{equation} 
    where the constant $C$ is independent of $n, h$ but depends on $T$.
\end{theorem}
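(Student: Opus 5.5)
The strategy is the usual one for a two-step method: establish consistency (a local truncation error), then accumulate it through the one-step map $\Phi_h$ using Proposition \ref{stable}. The sEI is really a two-step scheme: $U^{n+1}=e^{2hL}U^{n-1}+2he^{hL}F(U^n)$.

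\textbf{Step 1: the exact solution.} Start from the variation-of-constants formula for (\ref{U'=LU+F(U)}). Subtracting its forward and backward versions about $\tau_n$ gives
\begin{equation*}
U(\tau_{n+1})=e^{2hL}U(\tau_{n-1})+\int_{-h}^{h}e^{(h-s)L}F(U(\tau_n+s))\dd s .
\end{equation*}
Define the defect
\begin{equation*}
\delta_{n+1}=U(\tau_{n+1})-e^{2hL}U(\tau_{n-1})-2he^{hL}F(U(\tau_n)).
\end{equation*}
Set $g(s)=e^{(h-s)L}F(U(\tau_n+s))$. Then
\begin{equation*}
\delta_{n+1}=\int_{-h}^{h}g(s)\dd s-2hg(0),
\end{equation*}
which is the midpoint-rule error for $g$ on $[-h,h]$. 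Because the midpoint rule integrates linear functions exactly, $\|\delta_{n+1}\|\le Ch^3\max|g''|$.

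\textbf{Step 2: bounding $g''$.} Differentiating $g$ twice brings in $L$, $L^2$, $F'$, $F''$ and $U'$, $U''$. The needed regularity is as follows.
\begin{itemize}
\item Since $\boldsymbol B,\boldsymbol E\in C^2$, the map $F$ is $C^2$ on bounded sets.
\item The exact solution stays in a bounded set on $[0,T]$, so $U\in C^3$ there with bounded derivatives.
\item By Remark \ref{matrix norm}, $\|e^{(h-s)L}\|\le 1+2h\le 3$.
\end{itemize}
Together these give $\|\delta_{n+1}\|\le C_1h^3$ with $C_1$ depending on $T$ and on field bounds along the trajectory.

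For the starting step, $U^1$ is the one-step exponential Euler update; read the printed $\tau$ as $h$. Its local error is $\int_0^h e^{(h-s)L}F(U(s))\dd s-he^{hL}F(U^0)=O(h^2)$. This costs only one step, so $\|U(\tau_1)-U^1\|\le C_2h^2$.

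\textbf{Step 3: error recursion.} Set $\mathcal E^n=(U(\tau_n)-U^n,\,U(\tau_{n-1})-U^{n-1})^T$. The exact pair satisfies $\mathcal U(\tau_{n+1})=\Phi_h(\mathcal U(\tau_n))+(\delta_{n+1},0)^T$. Proposition \ref{stable} then gives
\begin{equation*}
\|\mathcal E^{n+1}\|\le(1+Ch)\|\mathcal E^n\|+C_1h^3 .
\end{equation*}
A discrete Gronwall argument yields
\begin{equation*}
\|\mathcal E^n\|\le e^{CT}\bigl(\|\mathcal E^1\|+C_1Th^2\bigr)\le C h^2 .
\end{equation*}
Here $\|\mathcal E^1\|\le C_2h^2$, since $U^0=U(0)$.

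\textbf{Main obstacle.} The difficulty is that the Lipschitz constant of $F$ in Proposition \ref{stable} implicitly assumes bounded $\widehat B,\widehat E$ and their derivatives. $F(U)=G(\boldsymbol x)\boldsymbol u$ is only locally Lipschitz, so the stability bound applies only while the numerical solution stays near the exact one. I would close this with a bootstrap/induction.
\begin{itemize}
\item Let $K$ be the compact $1$-neighbourhood of the exact trajectory. Apply the stability estimate with constants computed on $K$.
\item Assume inductively that $U^k\in K$ for $k\le n$. The recursion then gives $\|\mathcal E^{n+1}\|\le Ch^2$.
\item For $h$ below some $h_0$, this forces $U^{n+1}\in K$, which continues the induction.
\item For $h\in[h_0,1]$ the bound holds trivially after enlarging $C$, using crude growth bounds on finitely many steps.
\end{itemize}
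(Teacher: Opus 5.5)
Your proposal is correct and follows the same route as the paper. Both arguments use the variation-of-constants formula taken forward and backward about $\tau_n$, an $O(h^3)$ symmetric (midpoint-rule) defect, the stability estimate of Proposition~\ref{stable}, and discrete Gronwall; your explicit $O(h^2)$ bound on $\|U(\tau_1)-U^1\|$ and your bootstrap keeping $U^n$ in a compact neighbourhood of the exact trajectory fill in points the paper leaves implicit, since its stability constant tacitly treats $\widehat B,\widehat E$ as globally bounded and $F$ as globally Lipschitz.
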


\begin{proof}
    The variation-of-constant formula to (\ref{U'=LU+F(U)}) is (we omit $\boldsymbol x$ for brevity from now on) 
\begin{equation}\label{vcf}
    U(\tau_{n}+s)=e^{s L}U(\tau_n)+\int_{0}^{s }e^{(s-\sigma)L}F(U(\tau_n+\sigma))\dd\sigma,\ \forall s\in(0,h].
\end{equation}
Multiplying $e^{s L}$ on both sides of (\ref{vcf}) and taking $s=h$, $s=-h$, we have
\begin{align*}
        e^{-hL}U(\tau_{n+1})&=U(\tau_n)+\int_{0}^{\tau }e^{-sL}F(U(\tau_n+s))\dd s,
     \\
        e^{hL}U(\tau_{n-1})&=U(\tau_n)-\int_{-\tau}^{0 }e^{-sL}F(U(\tau_n+s))\dd s. 
\end{align*}
Subtracting the above two equations gives
\[
e^{-hL}U(\tau_{n+1})-e^{hL}U(\tau_{n-1})=\int_{-\tau }^{\tau }e^{-sL}F(U(\tau_n+s))\dd s.
\]
Multiplying $e^{hL}$ on both sides, we have
\begin{equation}\label{main}
    U(\tau_{n+1})=e^{2h L}U(\tau_{n-1})+2he^{hL}F(U(\tau_n))+e^{hL}\int_{-h }^{h }e^{-sL}F(U(\tau_n+s))-F(U(\tau_n))\dd s.
\end{equation}
Define $\xi_n(s):=e^{sL}F(U(\tau_n+s))$. By Taylor's theorem, the local truncation error is 
\begin{equation*}
    \mathcal E_{n+1}^h:=e^{hL}\left[\int_{-h}^h\xi_n(s)-\xi_n(0)\dd s\right]=e^{hL}\int_{-h}^{h}\frac{s^2}{2}\xi''(\sigma)\dd s=\frac{h^3}{3}e^{hL}\xi''(\sigma),\quad\sigma\in[-h,h].
\end{equation*}
Since $\boldsymbol B$ and $\boldsymbol E$ are both $C^2$ functions, $F$ is also $C^2$, consequently $\bigl\{\xi_n''(s)\bigr\}_{n=1}^{[T/h]}$  is uniformly bounded on $[-h,h]$. Moreover, since $||e^{hL}||\le 2$, we have $||\mathcal E_n^h||\le C h^3$. 

Letting $\mathcal U(\tau_{n})=(U(\tau_n),U(\tau_{n-1}))^T$ and according to the Proposition \ref{stable}, there exists a positive number $C_1$ independent to $h$ such that
\begin{equation}\label{gronwall}
    ||\mathcal U(\tau_{n+1})-\mathcal U^{n+1}||\le ||\Phi_h(\mathcal U(\tau_n))-\Phi_h(\mathcal U^n)||+2||\mathcal E_{n+1}^h||\le (1+Ch)||\mathcal U(\tau_{n})-\mathcal U^{n}||+C_1h^3.
\end{equation}
Applying the discrete Gronwall's inequality to $\bigl\{||\mathcal U(\tau_{n})-\mathcal U^{n}||\bigr\}_{n=1}^{\infty}$ in (\ref{gronwall}) yields
\begin{equation*}
    ||U(\tau_n)-U^n||\le ||\mathcal U(\tau_{n})-\mathcal U^{n}||\le C h^2.
\end{equation*}
\end{proof}

\section{Numerical Results}\label{numerical}
In this section, we report a numerical experiments of the relativistic dynamics of charged particles by using our presented sEI in Section \ref{brief}. The numerical results demonstrate its superb properties in accuracy and efficiency. To clearly analyze the global error at $T=1$, we introduce the following error function to illustrate the convergence orders of $U$.
\[
\mathrm{err}_{U}(\tau_n):=\frac{\|\boldsymbol y^n-\boldsymbol y(\tau_n)\|}{\|\boldsymbol y(\tau_n)\|}+\frac{\|\boldsymbol u^n-\boldsymbol u(\tau_n)\|}{\|\boldsymbol u(\tau_n)\|}.
\]
More precisely, in equation (\ref{origin}), we set the electromagnetic field as
\[
\boldsymbol B\left(\boldsymbol x\right)=\begin{pmatrix}\cos\left(x_2\right)\\1+\sin\left(x_3\right)\\\cos\left(x_1\right)\end{pmatrix}+\begin{pmatrix}-x_1\\0\\x_3\end{pmatrix},\quad\mathrm{and}\quad \boldsymbol E(\boldsymbol x)=-\nabla V(\boldsymbol x),\quad V(\boldsymbol x)=\left(x_1^2+x_2^2\right)^{-\frac12}.
\]
We consider two sets of initial: (I). $\boldsymbol y_0=\left(\frac13,\frac14,\frac12,0\right)^T$, $\boldsymbol u_0=\left(\frac25,\frac23,1,0\right)^T$ and (II). $\boldsymbol y_0=\left(0,1,0.1,0\right)^T$, $\boldsymbol u_0=\left(0.09,0.05.0.2,0\right)^T$. And we take the time step as $h=2^{-j}$ for $j=5,6,\cdots,10$. The error results are presented for initial data (I) and (II) respectively in Figure \ref{2order}. It can be observed that sEI indeed exhibits second-order accuracy.
\begin{figure}[h!]
    \centering
    \begin{minipage}{0.35\textwidth}
        \centering
        \includegraphics[scale=0.45]{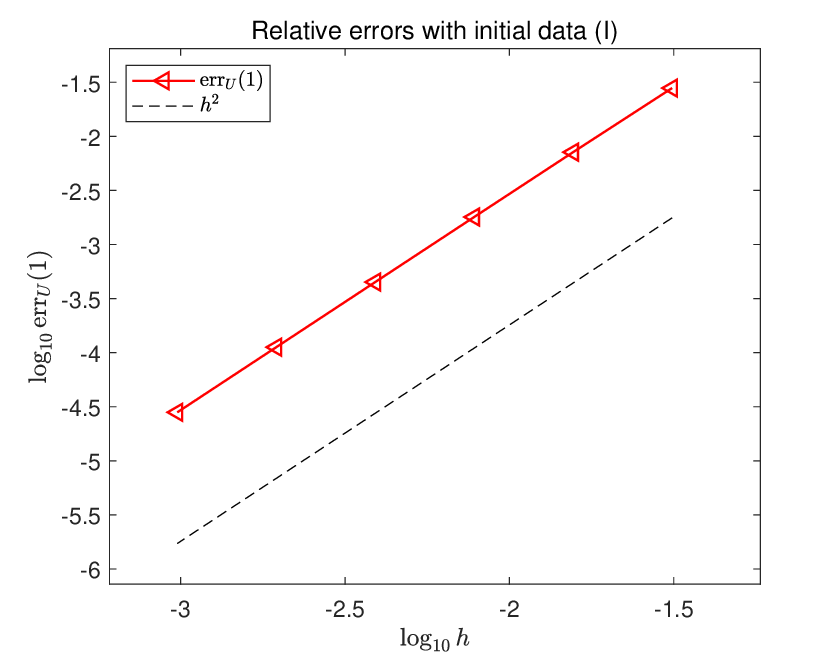}
    \end{minipage}
    \thinspace
    \begin{minipage}{0.35\textwidth}
        \centering
        \includegraphics[scale=0.45]{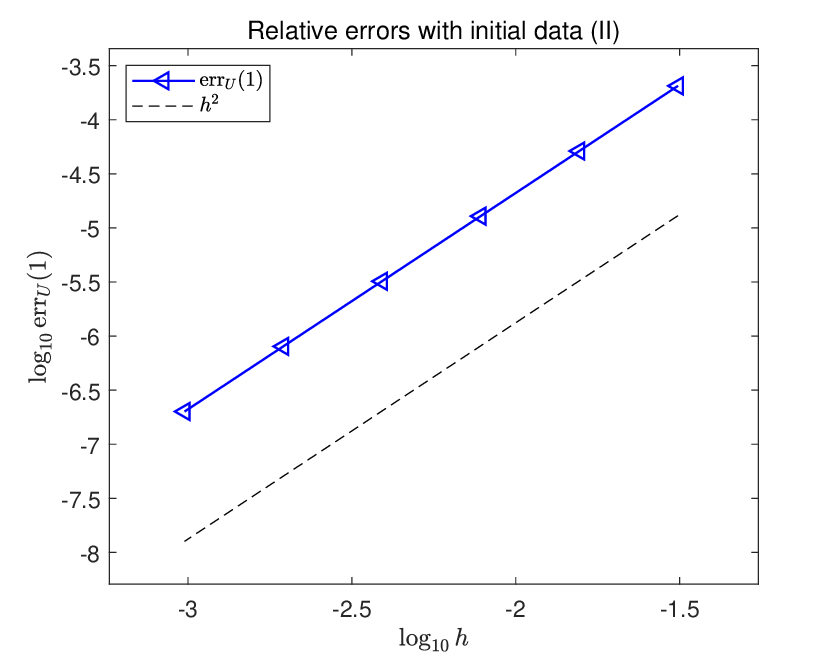}
    \end{minipage}    
    \caption{Global errors $\mathrm{err}_U(1)$ under initial conditions (I) and (II).}\label{2order}
\end{figure}

\begin{figure}[h!]
    \centering
    \begin{minipage}{0.35\textwidth}
        \centering
        \includegraphics[scale=0.45]{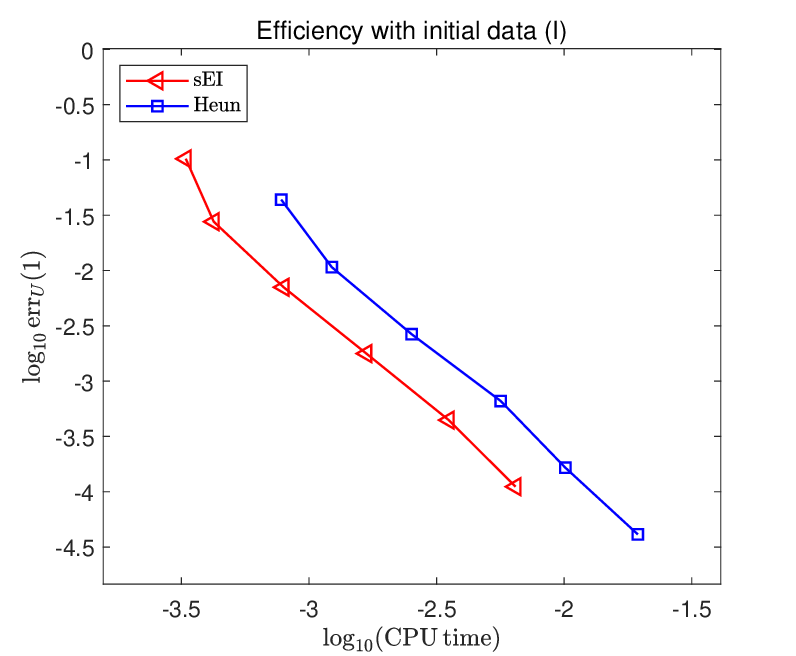}
    \end{minipage}
    \thinspace
    \begin{minipage}{0.35\textwidth}
        \centering
        \includegraphics[scale=0.45]{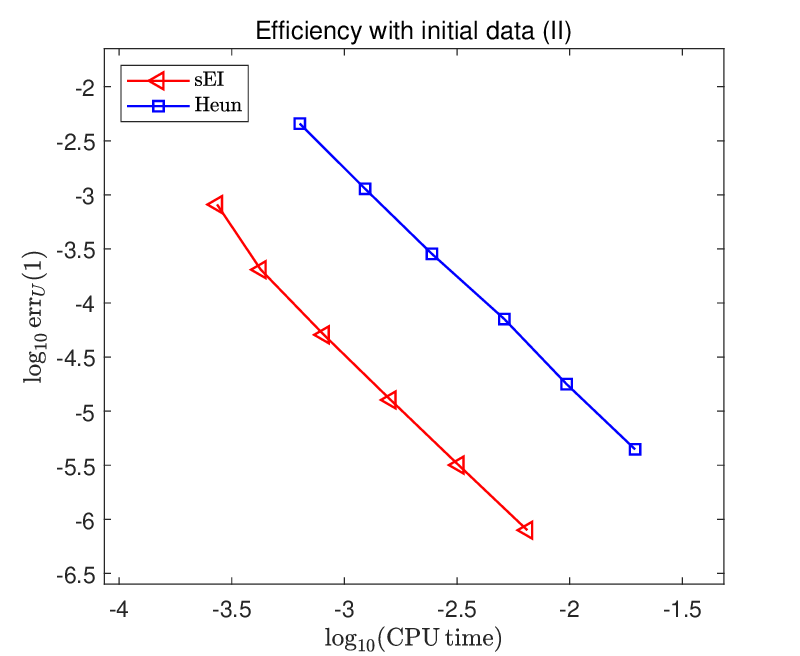}
    \end{minipage}    
    \caption{The comparison of the CPU time between sEI and Heun under initial conditions (I) and (II).}\label{CPU time}
\end{figure}

\begin{figure}[h!]
    \centering
    \begin{minipage}{0.35\textwidth}
        \centering
        \includegraphics[scale=0.45]{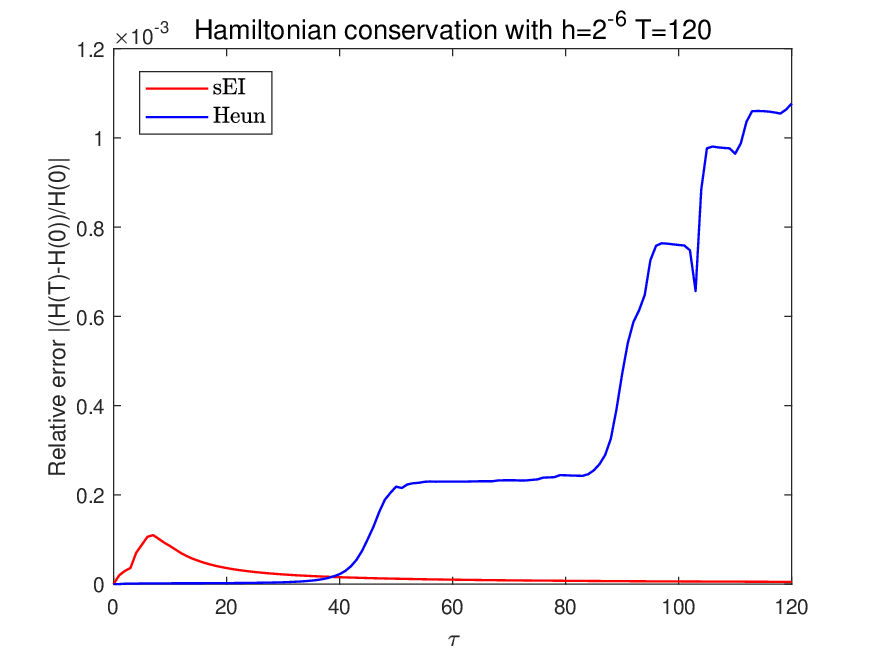} 
    \end{minipage}
    \hspace{0.02\textwidth}
    \begin{minipage}{0.35\textwidth}
        \centering
        \includegraphics[scale=0.45]{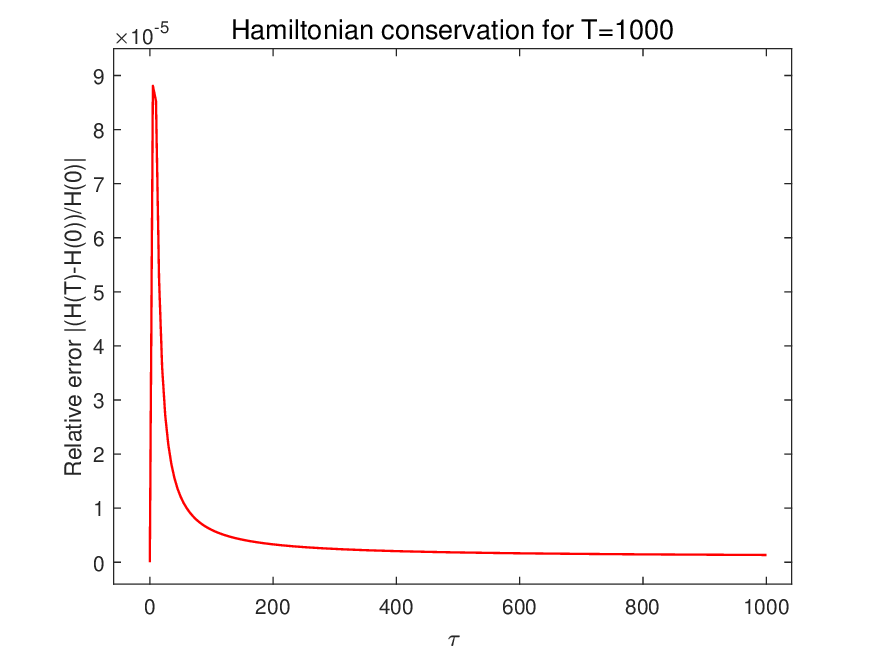}
    \end{minipage}    
    \caption{The relative errors of Hamiltonian for sEI and  Heun method with $h=2^{-6}$.}\label{Hamilton}
\end{figure}

To investigate the efficiency of the sEI, we compare it with the Heun method, a explicit second-order Runge-Kutta scheme. In Figure \ref{CPU time}, owing to the concise formulation of sEI, the proposed method possesses a clear advantage in terms of computational efficiency.

Finally, we discuss the conservation of the Hamiltonian $H(\boldsymbol x,w)=V(\boldsymbol x)-iw$ of (\ref{origin}) by the sEI. In Figure \ref{Hamilton}, we adopt the initial value (II) and the terminal time $T=120$ to investigate the relative errors $\bigg|\frac{H(T)-H(0)}{H(0)}\bigg|$ of the Hamiltonian for both the sEI and the Heun method, as well as the long-time conservation behavior of the Hamiltonian by the sEI method up to the terminal time $T=1000$. It is evident that our sEI exhibits excellent near-conservation over long times, whereas the Heun method does not.
\FloatBarrier 
\section{Conclusion}\label{conclusion}
We have proposed an explicit symmetric exponential integrator for relativistic charged-particle dynamics in general magnetic fields. The scheme is derived from a 4D spacetime formulation of the equations of motion and is built on a straightforward Lie splitting method. The method is explicit, symmetric, and second-order accurate, demonstrating superior efficiency and remarkable near-conservation of the Hamiltonian. Theoretical analysis confirms its unconditional stability and second-order convergence. Future work will extend the approach to time-dependent or self-consistent field models, as well as to higher-order symmetric or geometric variants.

\bibliographystyle{cas-model2-names}

\bibliography{cas-refs}



\end{document}